\newtheorem{definition}{Definition}
\newtheorem{theorem}{Theorem}
\newtheorem{remark}{Remark}
\newtheorem{corollary}{Corollary}
\newtheorem{example}{Example}
\newcommand{\bC}{{\mathbb C}}
\begin{document}

\title{Stratification and coordinate systems  
 for the moduli space of rational functions}
\author{Masayo Fujimura and Masahiko Taniguchi}
\date{preprint}

\maketitle

\abstract{
In this note, we give a new simple system of 
global parameters on the 
moduli space of rational functions, and clarify 
the relation to the parameters indicating 
location of fixed points and the indices at them. 
As a byproduct, we solve a conjecture of Milnor affirmatively.
}

\section{Introduction}

Let ${\rm Rat}_d$ be the set of all rational 
functions of degree $d>1$, 
and ${\rm M}_d$ the set of all 
M\"obius conjugacy classes 
of elements in $ {\rm Rat}_d $, 
which is called the {\em moduli space  of rational functions of 
degree} $d$.

Here it is a fundamental problem 
to give a good system of parameters on ${\rm M}_d$. 
And McMullen showed in \cite{Mc} that, outside the Latt\'e loci, 
every multiplier spectrum at periodic points corresponds 
to a finite number of points in ${\rm M}_d$. 
This result is epoch-making, and many researches have been done 
on the system of multipliers, or indices, at periodic points. 
Among other things, the following example is well-known.

\begin{example}[\cite{M2}]\label{ex:milnor}
{\rm 
When $d=2$, there are $3$  fixed points counted 
including multiplicities, the indices of
which satisfy a single simple relation, called 
Fatou's index formula. Hence, we can consider a
map $\Phi_2:{\rm M}_2\to \bC^2$ induced by two of three
fundamental symmetric functions of 
multipliers at fixed points.
This map 
$\Phi_2$ is bijective, and hence gives a 
coordinate system for ${\rm M}_2$.}
\end{example}

\begin{remark}
In the case of polynomials, 
the set of multipliers, or indices, at
fixed points gives an interesting system 
of parameters on the moduli space of 
polynomials. For the details, see \cite{F} and \cite{FT}.
\end{remark}

Clearly, the multipliers at fixed points only are not enough 
to parametrize the moduli space ${\rm M}_d$ when $d>2$. 
But, it seems difficult to find a suitable set of 
multipliers at periodic points for obtaining a good 
system of global parameters. 
On the other hand, in the case of polynomials, 
the set of monic centered ones is 
often used as a virtual set of representatives of points in
the moduli space ${\rm MPoly}_d$ of polynomials of degree $d$, 
and it is well-known that the coefficients of them give 
a useful set of
 parameters on ${\rm MPoly}_d$, which in particular induces the 
complex orbifold structure of ${\rm MPoly}_d$.
We give, in \S 2, a family of rational functions 
whose coefficients give a good system of 
parameters on ${\rm M}_d$  
in a similar sense as in 
the case of the family of monic centered polynomials.

In \S 3, we investigate the correspondence between these 
coefficient parameters 
and the union of the set of the indices and 
location of fixed points, which gives a candidate of 
an important subsystem of parameters on ${\rm M}_d$. 
Here, the overlap type of 
fixed points naturally gives a stratification of ${\rm M}_d$. 
We introduce a natural system of coordinates on each stratum. 
As a byproduct, we give 
an affirmative answer to a conjecture of Milnor proposed 
in the book \cite{M}.

\section{A normalized family of rational functions}

A general form of a rational function of degree $d$ is 
\[
\frac{P(z)}{Q(z)}
\]
with polynomials $P(z)$ and $Q(z)$ of degree at most $d$, where 
$P(z)$ and $Q(z)$ have no common non-constant factors and 
one of them has $d$ as the degree. 
To consider the moduli space ${\rm M}_d$, we may 
assume without loss of generality that $Q(z)$ is 
of degree $d$, and that the resultant ${\rm Resul}(P,Q)$  
of $P(z)$ and $Q(z)$ does not vanish. 
Also it imposes no restriction to assume 
that $Q(z)$ is monic. We call such 
a rational function satisfying 
the above conditions a {\em canonical function}.

\begin{definition}
{\rm 
The {\em canonical family} $C_d$ of rational functions of degree $d$ 
is defined
as the totality of canonical functions of degree $d$ as above:
\[
\left\{R(z) = \frac{P(z)}{Q(z)} \in {\rm Rat}_d \Biggm| {\rm deg}\, Q = d, \
{\rm Resul}(P,Q)\not = 0,
\mbox{ $Q$ is monic}\right\}.
\]
Moreover, writing 
\[
P(z) = a_dz^d + \cdots + a_0, \qquad 
Q(z) = z^d + b_{d-1}z^{d-1} +\cdots + b_0,
\]
we call the vector 
$(a_d, \cdots, a_0, b_{d-1}, \cdots, b_0)$ the system of 
{\em coefficient parameters} for $C_d$.
}
\end{definition}

Every point in ${\rm M}_d$ contains an element in $C_d$ as a 
representative. 
On the other hand, since ${\rm M}_d$ is $(2d-2)$-dimensional, while 
the dimension of $C_d$ is $2d+1$, we can 
consider to impose three normalization 
conditions on elements in $C_d$. Here we impose 
\[
a_0 = 0, \quad b_1 = -1, \mbox{ and } \ b_0 = 1.
\]
We call a rational function in $C_d$ satisfying 
 these conditions a {\em normalized function}.

\begin{definition}
{\rm 
We call the family consisting of all normalized functions in
 $C_d$ the {\em normalized family} of degree $d$, and denoted by 
 $N_d$.
}
\end{definition}

More explicitly,
\[
  N_d= \left\{ \frac{a_dz^d+ \cdots + a_1z}
{z^d+b_{d-1}z^{d-1}+ \cdots + b_2z^2-z+1} \in C_d \right\},
\]
and we call the vector $(a_d,\cdots, a_1,b_{d-1},\cdots,b_2)$ 
the system of {\em coefficient parameters} for $N_d$.
Here, we can show that $N_d$ 
is an ample family of rational functions for every $d$.

\begin{example}
{\rm 
When $d=2$, the natural projection of $N_2$ to ${\rm M}_2$ is 
surjective.
To see this, it suffices to show that every possible
set of multipliers $\{m_1,m_2,m_3\}$ at fixed points corresponds
to a rational function in $N_2$ 
(cf. Example \ref{ex:milnor}).

First, if the set is  $\{1,1,1\}$, then a corresponding rational 
function in $N_2$ is uniquely determined 
(cf. Example \ref{ex:overlap}) and is  
\[
  R(z)=\frac{-z^2+z}{z^2-z+1}. 
\]
If the set is  $\{1,1,m\}$ with $m\not=1$,
 then a corresponding rational function is
\[
  R(z)=\frac{z(mz+p)}{p(z^2-z+1)} 
\]
with a solution $p$ of $ p^2+(m+1)p+m^2=0. $

Next, in the remaining cases, the set $ \{m_1,m_2,m_3\} $ of
multipliers satisfies that
$m_j\not=1$ $(j=1,2,3)$ and Fatou's index formula
\[
    \frac{1}{1-m_1}+\frac{1}{1-m_2}+\frac{1}{1-m_3}=1.
\]
Here if the set is $\{0,0,2\}$, we can see that a corresponding rational
function is
\[
  R(z)=\frac{(3/2)z^2}{z^2-z+1}. 
\]
And otherwise, we can choose $m$ and $m'$ among $\{m_1,m_2,m_3\}$
so that
\[
    m'\not\in \{0, \pm i/\sqrt{3}\}, \
    mm'-1\neq0 ,\ \mbox{ and  } \  m+m'-2 \neq 0,
\]
which are assumed to be  $m_1$ and $m_2$, respectively.
Then the equation
\[
   (-m_1^2+3m_1-3)p^2+(2m_2m_1-3m_2-1)p-m_2^2=0
\]
has a non-zero solution $p$.
With this $p$, we see that a corresponding rational function is
\[
    R(z)=\frac{-\bigl((m_1-2)p-m_2\bigr)
          \bigl((m_1p+1)z+(m_1^2-2m_1)p-m_2m_1\bigr)z} 
          {p\bigl((m_1-1)p-m_2+1\bigr)(z^2-z+1)}.
\]
Here, $ (m_1-1)p-m_2+1\not=0 $ from the assumption, 
and we conclude the assertion when $d=2$.

Note that, in terms of the
fundamental symmetric functions 
\[
   \sigma_1=m_1+m_2+m_3,\ \ \sigma_2=m_1m_2+m_1m_3+m_2m_3, \quad 
   \mbox{and}\quad 
   \sigma_3=m_1 m_2 m_3,
\]
the natural projection of 
$ N_2\cong \{(a_2,a_1)\mid a_2^2+a_1a_2+a_1^2\not=0\}$ to ${\rm M}_2$ 
is given by
\begin{align*}
  \sigma_1 &=\frac{2a_2^2+a_1^2a_2+a_1^3-2a_1^2+3a_1}
                  {a_2^2+a_1a_2+a_1^2},\\
  \sigma_2 &= \frac{-(a_1^2-2a_1)a_2^2+(a_1-2)a_2-2a_1^3+4a_1^2-4a_1+3}
               {a_2^2+a_1a_2+a_1^2},\\
  \sigma_3 & =\sigma_1-2.
\end{align*}
}
\end{example}

In general, we obtain the following.

\begin{theorem}
For every $d\geq 2$, 
the natural projection of $N_d$ to ${\rm M}_d$ is surjective.
\end{theorem}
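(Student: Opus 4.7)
The plan is to use the $3$-dimensional action of $\mathrm{PGL}(2,\bC)$ on $\mathrm{Rat}_d$ to match the three normalization conditions $a_0=0$, $b_0=1$, $b_1=-1$ that cut $N_d$ out of the canonical family $C_d$.

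First I would send a fixed point of $R$ to $0$. Since $R\in\mathrm{Rat}_d$ has $d+1$ fixed points in $\widehat{\bC}$, pick any such $p$ and conjugate by $M_1\in\mathrm{PGL}(2,\bC)$ with $M_1(0)=p$ and with $M_1(\infty)$ not a pole of $R$ (an open dense condition on the coset). Then $R_1:=M_1^{-1}\circ R\circ M_1$ fixes $0$ and has no pole at $\infty$, so after dividing numerator and denominator by the leading coefficient of the latter, $R_1\in C_d$ with $a_0=0$.

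Second, the remaining freedom is the $2$-parameter stabilizer of $0$ in $\mathrm{PGL}(2,\bC)$, namely the maps $M(z)=z/(s+tz)$ with $s\neq 0$. A direct expansion of $M^{-1}\circ R_1\circ M$ after multiplying through by $(s+tz)^d$ gives a denominator polynomial whose constant term is $b_0 s^d$, whose $z$-coefficient is $s^{d-1}\bigl[(db_0-a_1)t+b_1\bigr]$, and whose leading coefficient is $L(t):=\sum_{k=0}^{d}(b_k-ta_k)t^{d-k}$; using $\phi(z)=P(z)-zQ(z)$ one checks that $L(t)=\prod_{i=1}^{d}(1-\zeta_i t)$, where $\zeta_1,\dots,\zeta_d$ are the fixed points of $R_1$ other than $0$. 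The requirements $b_0^{\mathrm{new}}=1$ and $b_1^{\mathrm{new}}=-1$ give two equations whose ratio yields $s$ as a linear function of $t$, namely $s=-[(db_0-a_1)t+b_1]/b_0$; substituting back reduces the problem to the single polynomial equation
\[
E(t) \;:=\; (-1)^d\bigl[(db_0-a_1)t+b_1\bigr]^d - b_0^{d-1}L(t) \;=\; 0
\]
of degree at most $d$ in the scalar $t$, which has a root $t^*\in\bC$ by the fundamental theorem of algebra whenever $E$ is non-constant. The accompanying $s^*$ then defines the desired $M$.

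The main obstacle will be ruling out the degenerate configurations in which either $E(t)$ collapses to a non-zero constant, or every root of $E$ coincides with the single value $t_0=-b_1/(db_0-a_1)$ that would force $s=0$ and invalidate $M$. Such configurations force $L(t)$ to be a $d$-th power of a linear polynomial, i.e., all the non-zero fixed points of $R_1$ must coincide, together with specific algebraic relations among $a_1,b_0,b_1$ and this common value. These highly non-generic cases are handled by the freedom still available in Step 1: the holomorphic fixed point formula $\sum_i 1/(1-\mu_i)=1$ over the $d+1$ fixed points of $R$ prevents all multipliers from equaling any single pathological value (in particular $\mu\equiv d$, since $(d+1)/(1-d)\neq 1$ for $d\geq 2$), so an alternative choice of $p$, or of $M_1$ within its coset over $p$, escapes the residual degeneracy. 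The explicit case analysis for $d=2$ given in the preceding example already illustrates precisely the sort of flexibility one invokes.
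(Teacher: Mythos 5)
Your overall strategy coincides with the paper's: normalize a fixed point to $0$, then use the stabilizer $z\mapsto z/(pz+q)$ of $0$, solve the ratio condition $b_1^{\mathrm{new}}/b_0^{\mathrm{new}}=-1$ linearly for one parameter, and reduce $b_0^{\mathrm{new}}=1$ to a single polynomial equation of degree at most $d$. Your formulas for the transformed constant term, linear term, leading coefficient, and the factorization $L(t)=\prod_i(1-\zeta_i t)$ all match the paper. The gap is in your analysis of the degenerate configurations, and it is not a repairable slip: you claim that $E(t)$ degenerating forces $L(t)$ to be a $d$-th power of a linear form, i.e.\ that all non-zero fixed points coincide. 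This is false, and in the main degenerate case it is exactly backwards. If $E(t)\equiv C\neq 0$, then $b_0^{d-1}L(t)=(-1)^d\bigl[(db_0-a_1)t+b_1\bigr]^d-C$, so the roots of $L$ (the reciprocals $1/\zeta_i$) are the solutions of $\bigl[(db_0-a_1)t+b_1\bigr]^d=(-1)^dC$: these are $d$ \emph{distinct} points lying on a common circle (the preimage of a circle centered at $0$ under an affine map). The true obstruction is therefore that the $1/\zeta_i$ are concyclic, equivalently that the non-zero fixed points lie on a circle or line avoiding $0$ — not that they coincide. Consequently your proposed escape route (the holomorphic fixed point formula ruling out all multipliers equal to some single value) addresses a configuration that does not arise and says nothing about concyclicity.

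This misidentification has real consequences for low degrees. For $d=2$ and $d=3$ any two or three points are automatically concyclic, so the degenerate case cannot be avoided by a generic choice of base fixed point, and the argument as you set it up cannot close. The paper handles this by restricting the general argument to $d\geq 4$ (where, after possibly changing which fixed point is sent to $0$, one can arrange that the non-zero fixed points do not lie on a common circle or line in $\bC-\{0\}$, since $d\geq 4$ points in general position determine no common circle), and then disposes of $d=2$ by the explicit construction in the first Example and of $d=3$ by a separate Gr\"obner-basis computation occupying all of \S 4, which shows that the two normalizations (based at $0$ and at $w_1$) cannot both degenerate simultaneously. You also leave untreated the cases where $0$ is a non-simple fixed point ($a_1=b_0$) or where $db_0=a_1$, which the paper resolves by separate degree counts before the concyclicity argument even begins. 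To repair your proof you would need to (i) replace ``all fixed points coincide'' by the concyclicity condition, (ii) justify escaping it by re-choosing the base fixed point for $d\geq 4$, and (iii) supply independent arguments for $d=2,3$.
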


\begin{proof}
The assertion for the case that $d=2$ is shown in the above example.
When $d= 3$, we can show the assertion by direct calculations 
using a symbolic and algebraic computation system,
the detail of which is contained in \S 4 for the sake of 
readers' convenience. 
So, we assume that $d\geq 4$ in the sequel of the proof.

Let $x$ be a point of ${\rm M}_d$ and 
$R(z)$ a rational function of degree $d$ contained in the 
M\"obius conjugacy class $x$. Then we may assume that 
$R(z)$ is canonical and $R(0)=0$, 
by taking a M\"obius conjugate 
of $R(z)$ if necessary, which implies 
in particular that 
\[
  a_0 =0 \quad \mbox{and}\quad  b_0\not= 0. 
\]

Next, if we take conjugate of $R(z)$ by a translation 
$L(z) = z+ \alpha$. 
Then we have 
\[
  L^{-1}\circ R\circ L(z) = 
    \frac{\left(a_d(z+\alpha)^d+ \cdots +a_0 \right) 
    - \alpha \left( (z+\alpha)^d+ \cdots +b_0 \right) }
    {(z+\alpha)^d+ \cdots +b_0}, 
\]
which we write as
\[
  \frac{\tilde{a}_dz^d + \cdots + \tilde{a}_0}
  {z^d + \tilde{b}_{d-1}z^{d-1} +\cdots + \tilde{b}_0}.
\]
Here, if $\alpha$ is a fixed point of $R(z)$, then 
\[
  \tilde{a}_0 =0, \quad \mbox{and}\quad   \tilde{b}_0\not= 0. 
\]
Also, taking as $\alpha$ one, say $\zeta_R$, of 
fixed points of $R(z)$ with the largest 
multiplicities, we may assume that 
$R(z)$ has no non-zero fixed points 
with multiplicity $d$. 
Moreover, if $0$ is a 
non-simple fixed point of $L^{-1}\circ R\circ L(z)$, then 
\[
  \tilde{a}_1= \tilde{b}_0.
\]
And hence if $\tilde{a}_1\not= \tilde{b}_0$, 
then every fixed points of $R(z)$ is simple,
and there is a non-zero fixed point $\zeta_R$ of $R(z)$ such that 
$\tilde{b}_1\not=0$. 
Indeed, letting $\{\zeta_1,\cdots, \zeta_d\}$ be the set of non-zero 
fixed points of $R(z)$, we consider conjugates of $R(z)$ by 
$L_k(z) = z+\zeta_k$. Then 
\[
  \tilde{b}_1 = d \zeta_k^{d-1}+ 
   (d-1)b_{d-1} \zeta_k^{d-2}+ \cdots + b_1
\] 
can not be $0$ for all $k$. Also repeating such change of 
fixed points again if necessary, 
we can further assume that there are neither circles nor lines 
in $\bC-\{0\}$ 
which contain all non-zero fixed points, since we have assumed that 
 $d\geq 4$.

Thus we may assume from the beginning that 
{\em 
$a_0 =0$, $b_0\not= 0$, and 
$(db_0-a_1)z+ b_1$ is not constantly $0$, 
$R(z)$ has no non-zero fixed points 
with the multiplicities $d$, and if $R(z)$ has simple 
fixed points only, 
then there are neither circles nor lines in $\bC-\{0\}$ 
which contain all non-zero fixed points. }

Now, set 
\[
  T(z) = \frac{z}{pz+q} \qquad (q\not=0).
\]
Then we have 
\[
  T^{-1}\circ R\circ T(z) = 
  \frac{q(a_dz^d+ \cdots +a_1z(pz+q)^{d-1})}
  {-p(a_dz^d+ \cdots +a_1z(pz+q)^{d-1}) + (z^d+ \cdots +b_0(pz+q)^d)}.
\]
The constant term of the numerator remains to be $0$, 
and the coefficients of $z^d$ 
in the numerator and the denominator change to 
\begin{align*}
  a_d^*(p,q) &= q(a_d+ \cdots +a_1p^{d-1}) \quad \mbox{and} \\
  b_d^*(p) &= -p(a_d+ \cdots +a_1p^{d-1}) 
                         + (1+ \cdots +b_0p^d),
\end{align*}
respectively.
If $b_d^*(p)\not=0$, divide both of the numerator and the denominator 
of the conjugate $T^{-1}\circ R\circ T(z)$ by $b_d^*(p)$. Then 
the coefficients $a_1$, $b_0$ and $b_1$, for instance, 
change to 
\begin{align*}
  a_1(p,q) &= \displaystyle{\frac{a_1q^d}{b_d^*(p)}}, \quad 
  b_0(p,q) = \displaystyle{\frac{b_0q^d}{b_d^*(p)}}, \\
  b_1(p,q) &= \displaystyle{\frac{-a_1pq^{d-1}+ b_1q^{d-1}
                             + db_0pq^{d-1}}{b_d^*(p)}}.
\end{align*} 
Also, the condition $b_1(p,q)/b_0(p,q) = -1$ implies that
\[
  q=q(p) =-\frac{(db_0-a_1)p+b_1}{b_0}.
\]

First, if $b_0=a_1$, 
then $b_0(p,q(p))$ is a rational function of $p$ such that 
the degrees of the numerator and the denominator are 
exactly $d$ and not greater than $d-1$, 
respectively. Hence there is a finite $p$ with $b_0(p,q(p))=1$. 
Next, if $db_0=a_1$, 
then $b_0(p,q(p))$ is a rational function of $p$ such that 
the degree of the denominator is exactly $d$ and 
the numerator is a non-zero constant.
Hence there is a finite $p$ with $b_0(p,q(p))=1$. 
Finally, if otherwise, namely, if $b_0\not=a_1$ and $db_0\not=a_1$, 
then the degrees of 
the numerator and the denominator are exactly $d$ and 
$R(z)$ has simple fixed points only. We write non-zero fixed points of 
$R(z)$ as
$\{\zeta_k\}_{k=1}^d$. Suppose that $b_0(p,q(p))$ 
can take the value $1$ at $\infty$ only.
Then with some non-zero constant $C$, 
\[
  b_0(p,q(p))= 1 + \frac{C}{b_d^*(p)},
\]
which implies that $\{1/\zeta_k\}_{k=1}^d$ lie on the same circle, 
for $b_d^*(p) = \prod_{k=1}^d\, (1-\zeta_kp)$.
But then, $\{\zeta_k\}_{k=1}^d$ should be 
on a same circle or a line not containing 
$0$, which contradicts to one of the assumptions from the beginning.
Hence we conclude also in this case that 
there is a finite $p$ such that $b_0(p,q(p))=1$.

Thus we obtain a $T(z)$ such that
$T^{-1}\circ R\circ T(z)$  belongs to $N_d$ if $d\geq 4$, 
and the proof is now complete.
\end{proof}

For a generic point of ${\rm M}_d$, 
there are only a finite number of 
rational functions in $N_d$ belonging to 
the point, as is seen from the proof of Theorem 1. 
On the other hand, some points of ${\rm M}_d$ can blow up in 
$N_d$ as in the following example.

\begin{example}
{\rm 
Set 
\[
  R(z)=\frac{-3z^3-4z^2-2z}{z^3-z-1}.
\]
Then $R(z)$ has a simple fixed point at $0$, and 
one with multiplicity $3$ at $-1$.

As in the proof of Theorem 1, letting 
\[
  T(z)=\frac{z}{pz-1-p} \quad(p\neq -1),
\]
set $ R_p(z)=T^{-1}\circ R\circ T(z)$.
Then we have
\[
  R_p(z)=
  \frac{(2p^2+4p+3)z^3+(-4p^2-8p-4)z^2+(2p^2+4p+2)z}
  {(p^2+2p+1)z^3+(-p^2-2p)z^2+(-p^2-2p-1)z+p^2+2p+1}.
\]
Hence if we set $ \tilde{p}=1/(p^2+2p+1) $, 
\[
  R_p(z) = \tilde{R}_{\tilde{p}}(z)=
    \frac{(\tilde{p}+2)z^3-4z^2+2z}{z^3+(\tilde{p}-1)z^2-z+1}.
\]
Thus $ \tilde{R}_{\tilde{p}}(z) $ belongs to $N_3$, and 
represents the same point of ${\rm M}_3$ for every non-zero $\tilde{p}$.
Indeed, every $ \tilde{R}_{\tilde{p}}(z) $ is conjugate to 
$ \tilde{R}_{1}(z) $ by 
\[
S(z) = \frac{z}{(1-\tilde{p}^{1/2})z + \tilde{p}^{1/2}}.
\]
}
\end{example}

\section{A stratification of the moduli space}

Every rational function $R(z)=P(z)/Q(z)$ 
of degree $d$ not fixing $\infty$ 
can be written also as 
\[
  R(z) = z - \frac{\hat{P}(z)}{Q(z)}
\]
with monic polynomials $\hat{P}(z)$ and $Q(z)$ of degree $d+1$ 
and $d$, respectively. Using this representation, we have another 
system of parameters, some of which are fixed points of $R(z)$.

\begin{definition}
{\rm
Let
\[
  R(z) = z - \frac{\hat{P}(z)}{Q(z)},
\]
with
\[
  \hat{P}(z) = zQ(z)- P(z)=\prod_{j=1}^p\, (z-\zeta_j)^{n_j} 
   \qquad (\zeta_j\in \bC),
\]
where $\zeta_j$ are mutually distinct and
 $n_j$ are positive integers which satisfy 
\[
\sum_{k=1}^p n_k=d+1.
\] 
Then we call the set 
$\{n_1, \cdots, n_p\}$ the {\em overlap type}
of fixed points of $R(z)$. 

We set 
\[
  C\{n_1, \cdots, n_p\} =
     \bigl\{R(z)\in C_d  \bigm| 
        \mbox{ the overlap type is }\{n_1, \cdots, n_p\}\bigr\}
\]
and call it the {\em $\{n_1, \cdots, n_p\}$-locus of $C_d$}.
The subset
\[
  C_d' =
    \bigl\{R(z)\in C_d 
       \bigm| \mbox{ the overlap type is not }
      \{1, \cdots, 1\}\bigr\}
\]
of $C_d$ is called the {\em overlap locus} of $C_d$. 

Similarly, we can define the 
{\em $\{n_1, \cdots, n_p\}$-locus of $N_d$}
by setting 
\[
    N\{n_1, \cdots, n_p\} =
     \bigl\{R(z)\in N_d  \bigm| \mbox{ the overlap type is }
      \{n_1, \cdots, n_p\}\bigr\}.
\]
Also the subset
\[
    N_d' =
      \bigl\{R(z)\in N_d
        \bigm| \mbox{ the overlap type is not }
        \{1, \cdots, 1\}\bigr\}
\]
of $N_d$ is called the {\em overlap locus} of $N_d$. 
}
\end{definition}

Since the overlap type of fixed points is 
invariant under M\"obius 
conjugation, Theorem 1 implies the following result.

\begin{corollary}
Let ${\rm M}_d'$ be the subset of all points of ${\rm M}_d$ 
represented by 
rational functions having non-simple fixed points. Then 
the natural projection $\pi$ of $N_d'$ to ${\rm M}_d'$ 
is surjective for every $d\geq 2$.
\end{corollary}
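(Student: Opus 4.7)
The corollary should follow almost immediately from Theorem 1 together with the Möbius-invariance of the overlap type. My plan is to take any $x \in {\rm M}_d'$, apply Theorem 1 to obtain a representative $\tilde{R} \in N_d$ of $x$, and then verify that $\tilde{R}$ automatically lies in $N_d'$.

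First, I would fix $x \in {\rm M}_d'$ and choose any representative $R_0 \in {\rm Rat}_d$ that has a non-simple fixed point on $\bP^1$. After a preliminary Möbius conjugation sending some non-fixed point to $\infty$, I may assume $R_0$ is canonical; then its overlap type $\{n_1,\ldots,n_p\}$, defined via $\hat{P}$, is well-defined and contains some $n_j \ge 2$. Theorem 1 then supplies a representative $\tilde{R} \in N_d$ of the same class $x$, obtained as a Möbius conjugate of $R_0$. Since every element of $N_d \subset C_d$ is canonical and therefore does not fix $\infty$ (as $\tilde{R}(\infty) = a_d \in \bC$), the overlap type of $\tilde{R}$ is also well-defined, and it remains to check that it agrees with that of $R_0$.

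The only step requiring justification is the Möbius-invariance of the overlap type. I would verify this by noting that for any Möbius map $L$, the correspondence $\zeta \mapsto L^{-1}(\zeta)$ is a bijection between the fixed points of $R$ on $\bP^1$ and those of $L^{-1} \circ R \circ L$, and it preserves multiplicities. Equivalently, for any Möbius conjugation that preserves the non-fixing of $\infty$, the polynomial $\hat{P}$ transforms by precomposition with $L$ up to a nonzero scalar, so the multi-set of root multiplicities is unchanged. Consequently $\tilde{R}$ has the same overlap type as $R_0$, which is not $\{1,\ldots,1\}$; hence $\tilde{R} \in N_d'$ and $\pi(\tilde{R}) = x$. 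I do not anticipate any serious obstacle: the content is genuinely that of a corollary, and everything reduces to Theorem 1 plus a one-line invariance check.
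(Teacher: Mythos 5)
Your proposal is correct and follows exactly the paper's own (one-line) argument: the paper simply remarks that the overlap type of fixed points is invariant under M\"obius conjugation and then invokes Theorem 1. Your additional care about $\infty$ not being a fixed point of a canonical function is a reasonable elaboration of the same idea, not a different route.
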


\begin{definition}
{\rm 
The image of every 
$\{n_1, \cdots, n_p\}$-locus of $N_d$ 
by $\pi$ is called 
 the {\em $\{n_1, \cdots, n_p\}$-stratum of ${\rm M}_d$}, and 
 denoted by $M\{n_1, \cdots, n_p\}$.
The resulting stratification 
of  ${\rm M}_d$ is called the 
{\em overlap type stratification}. 
}
\end{definition}

\begin{remark}
The above loci are defined by algebraic equations 
(cf. Example \ref{ex:overlap} and \ref{ex:overlap2}), and 
hence a Zariski open subset of complex algebraic sets in $C_d$
 and in $N_d$ (with respect to the system of coefficient parameters). 
For instance, 
\[
   C_d' = \left\{ \frac{\hat{P}(z)}{Q(z)}
      \in C_d \bigm|  {\rm Discr}(\hat{P}) = 0\right\}.
\]
\end{remark}

\begin{example}\label{ex:overlap}
{\rm
In the case of $d=2$,
\begin{align*}
   C\{3\} &\cong \biggl\{(a_2,a_1,a_0,b_1,b_0)\biggm|
             \begin{array}{l}
                a_1 = b_0-(b_1-a_2)^2/3, \\
                a_0 = -(b_1-a_2)^3/27
             \end{array} \biggr\}, \\
   C_2' &\cong \biggl\{(a_2,a_1,a_0,b_1,b_0)\biggm|
            \begin{array}{l}
             -27a_0^2 + a_0\left\{ 4(b_1-a_2)^3-18(b_0-a_1)(b_1-a_2)\right\} \\
             \ \ +(a_1-b_0)^2(b_1-a_2)^2+4(a_1-b_0)^3=0
            \end{array}\biggr\}, \\
   N\{3\} & \cong \bigl\{(-1,1,0,-1,1) \bigr\}, \\
   N_2' & \cong \bigl\{ (a_2,a_1,0,-1,1)\bigm| \ 
             a_1-1=-(a_2+1)^2/4 \quad \mbox{or} \quad a_1=1 \bigr\}. 
\end{align*}
}
\end{example}
 
\begin{example}\label{ex:overlap2}
{\rm
In the case of $d=3$,
\begin{multline*}
   C\{4\}\cong \biggl\{(a_3,a_2,a_1,a_0, b_2,b_1,b_0)\\
           \quad \biggm|
             \begin{array}{l}
               a_2 =b_1 -3(b_2-a_3)^2/8,\quad
               a_1 = b_0 -(b_2-a_3)^3/16, \\
               a_0 = -(b_2-a_3)^4/256
             \end{array} \biggr\},
\end{multline*}
\begin{multline*}
  C_3'\cong \biggl\{(a_3,a_2,a_1,a_0, b_2,b_1,b_0)\biggm| \ 
                \mbox{D}=0\biggr\}, \\
\end{multline*}
where
\begin{align*}
  \mbox{D}
   & =256a_0^3
       +a_0^2\bigl\{128(b_1-a_2)^2-144(b_2-a_3)^2(b_1-a_2)+27(b_2-a_3)^4\\
   & +192(b_2-a_3)(b_0-a_1)\bigr\}
      +a_0\bigl\{16(b_1-a_2)^4-4(b_2-a_3)^2(b_1-a_2)^3\\
   &   -80(b_0-a_1)(b_2-a_3)(b_1-a_2)^2
       +18(b_0-a_1)((b_2-a_3)^3+8(b_0-a_1))(b_1-a_2)\\
   &     -6(b_0-a_1)^2(b_2-a_3)^2\bigr\}
      +(b_0-a_1)^2(4(b_1-a_2)^3-(b_2-a_3)^2(b_1-a_2)^2\\
   &              -18(b_0-a_1)(b_2-a_3)(b_1-a_2)
                 +(b_0-a_1)(4(b_2-a_3)^3+27(b_0-a_1)))
\end{align*}
\begin{align*}
  N\{4\} \cong \bigl\{(c,-1,1,0, c,-1,1)\bigm| c\in \bC \bigr\},
\end{align*}
and
\begin{multline*}
  N_3' \cong \biggl\{(a_3,a_2,a_1,0, b_2,-1,1)\\
          \quad\biggm| 
           \begin{array}{l}
               -27(a_1-1)^2+ (a_1-1)\bigl(4(b_2-a_3)^3
               +18(a_2+1)(b_2-a_3)\bigr) \\ \ \ 
               +(a_2+1)^2(b_2-a_3)^2+4(a_2+1)^3=0  
               \quad \mbox{or} \quad   a_1=1 
           \end{array}\biggr\}.
\end{multline*}
}
\end{example}
 
 \medskip
On the other hand, it is well-known that the denominator $Q(z)$ 
of $R(z)$ in $C\{n_1, \cdots, n_p\}$ 
can be represented uniquely as
\[
  Q(z) = \sum_{k=1}^p\, 
    \biggl\{
    \Bigl(\sum_{n=0}^{n_k-1}\, \alpha_{k,n_k-n}{(z-\zeta_k)^{n}}\Bigr)
  \prod_{j\not=k} \, (z-\zeta_j)^{n_j}\biggr\}.
\]  
In other words, $Q(z)/\hat{P}(z)$ has a unique partial fractions decomposition
\[
 \frac{\alpha_{1,n_1}}{(z-\zeta_1)^{n_1}}+\cdots
              +\frac{\alpha_{1,1}}{z-\zeta_1} 
              +\frac{\alpha_{2,n_2}}{(z-\zeta_2)^{n_2}}+\cdots
              +\frac{\alpha_{p,1}}{z-\zeta_p}.
\]
Here, the assumptions imply that $\alpha_{k,n_k}\not=0$ 
for every $k$ and 
\[
  \sum_{k=1}^p\, \alpha_{k,1} = 1.
\]

\begin{definition}
{\rm 
The set  $\{\zeta_k\}$ of fixed points and the set 
$\{\alpha_{k,\ell}\}$ of coefficients 
 give a system of parameters for $C\{n_1, \cdots, n_p\}$, 
 and is called
the system of {\em decomposition parameters} 
for $C\{n_1, \cdots, n_p\}$.
}
\end{definition}

\begin{theorem}
Set
\begin{align*}
  \tilde{E}\{n_1, \cdots, n_p\} =
    \biggl\{ 
  & (\zeta_1, \cdots, \zeta_p,
        \alpha_{1,1}, \cdots, \alpha_{1,n_1},
        \alpha_{2,1}, \cdots, \alpha_{p,n_p}) \in \bC^{d+p+1}\\
  & \biggm|  \sum_{k=1}^p\, \alpha_{k,1}= 1, \quad 
       \alpha_{k,n_k}\not=0 \quad (k=1,\cdots, p) \biggl\}.
\end{align*}
Then the natural projection $\Pi$ of 
$\tilde{E}\{n_1, \cdots, n_p\}$ to 
$C\{n_1, \cdots, n_p\}$ (with respect to the system of 
coefficient parameters) is a holomorphic surjection.

Moreover, $C\{n_1, \cdots, n_p\}$ has a 
complex manifold structure such that $\Pi$ is
a finite-sheeted holomorphic covering projection.
\end{theorem}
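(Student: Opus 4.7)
The plan is to write $\Pi$ down as an explicit polynomial map, use the partial fraction decomposition to produce preimages of arbitrary points of $C\{n_1, \cdots, n_p\}$, and then realize $C\{n_1, \cdots, n_p\}$ as the quotient of $\tilde E\{n_1, \cdots, n_p\}$ by a free action of a finite symmetric group. Explicitly, given $(\zeta_k,\alpha_{k,\ell})\in \tilde E\{n_1, \cdots, n_p\}$, I would set $\hat P(z)=\prod_{k=1}^p(z-\zeta_k)^{n_k}$, define $Q(z)=\hat P(z)\sum_{k,\ell}\alpha_{k,\ell}/(z-\zeta_k)^{\ell}$ (which is a polynomial because the prescribed principal parts cancel all poles), and finally $P(z)=zQ(z)-\hat P(z)$. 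The condition $\sum_k\alpha_{k,1}=1$ together with $\deg\hat P=d+1$ forces $Q$ to be monic of degree exactly $d$, while $\alpha_{k,n_k}\neq 0$ forces $Q(\zeta_k)\neq 0$, so $P$ and $Q$ share no root and $R=P/Q$ lies in $C\{n_1, \cdots, n_p\}$. Since the construction is polynomial in the decomposition parameters, the resulting map $\Pi$ is holomorphic.

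For surjectivity, I would take $R=P/Q\in C\{n_1, \cdots, n_p\}$; by the definition of the stratum, the fixed point polynomial factors as $\hat P(z)=\prod(z-\zeta_k)^{n_k}$. The nonvanishing of ${\rm Resul}(P,Q)$ forces $Q(\zeta_k)\neq 0$, because otherwise $\hat P(\zeta_k)=\zeta_k Q(\zeta_k)-P(\zeta_k)=0$ would give $P(\zeta_k)=0$, producing a common root. Hence $Q/\hat P$ has pole order exactly $n_k$ at each $\zeta_k$ and admits the partial fraction decomposition written just before the statement, with $\alpha_{k,n_k}\neq 0$. Comparing behavior at $\infty$, where $Q/\hat P$ is asymptotic to $1/z$ because both numerator and denominator are monic and $\deg\hat P=\deg Q+1$, gives $\sum\alpha_{k,1}=1$. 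Thus $R$ has a preimage in $\tilde E\{n_1, \cdots, n_p\}$, proving surjectivity.

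For the covering structure, let $S=\prod_i S_{m_i}$, where $m_i$ is the number of indices $k$ for which $n_k$ takes the $i$-th distinct value; the group $S$ acts holomorphically on $\tilde E\{n_1, \cdots, n_p\}$ by permuting labels of fixed points sharing the same multiplicity together with the corresponding blocks of $\alpha_{k,\ell}$'s. Two decomposition parameters have the same image under $\Pi$ if and only if they lie in a common $S$-orbit, by the uniqueness of the unordered root factorization of $\hat P$ and of the partial fraction decomposition of $Q/\hat P$. This action is free because the $\zeta_k$ are pairwise distinct, so any nontrivial permutation moves at least one of them, and it is automatically properly discontinuous since $S$ is finite. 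The standard quotient construction then makes $\tilde E\{n_1, \cdots, n_p\}/S$ into a complex manifold on which the projection from $\tilde E\{n_1, \cdots, n_p\}$ is an $|S|$-sheeted unramified holomorphic covering; transferring this complex structure along the bijection $\tilde E\{n_1, \cdots, n_p\}/S\to C\{n_1, \cdots, n_p\}$ induced by $\Pi$ equips the latter with a complex manifold structure for which $\Pi$ is a finite-sheeted holomorphic covering. The main obstacle I anticipate is the careful verification that the fibers of $\Pi$ coincide exactly with the $S$-orbits and that the action is free; once these are in place, the remaining assertions are routine consequences of the quotient construction for free actions of finite groups on complex manifolds.
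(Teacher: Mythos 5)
Your argument is correct and follows essentially the same route as the paper: the paper realizes your quotient $\tilde{E}\{n_1,\cdots,n_p\}/S$ concretely as a subset $E\{n_1,\cdots,n_p\}$ of a product of configuration spaces $\prod_n C_{N_n}(\bC^{n+1})$ and factors $\Pi=\iota\circ\sigma$ through the canonical finite covering $\sigma$ onto it, which is exactly your quotient by the group permuting fixed points of equal multiplicity together with their blocks of $\alpha$'s. Your surjectivity and freeness verifications match what the paper establishes in the partial-fraction discussion preceding the theorem.
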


We call $\tilde{E}\{n_1, \cdots, n_p\}$ the 
{\em marked $\{n_1, \cdots, n_p\}$-parameter domain}.

\proof
Since $\Pi$ is a polynomial map, it is holomorphic. 
To show other assertions, note that 
the defining domains of the system of decomposition 
parameters is the product space 
\[
  \prod_{n=1}^{d+1}\, C_{N_n}(\bC^{n+1}),
\]
where $C_m(\bC^n)$ is the configuration space of $m$ distinct vectors in $\bC^n$ and $N_n$ is the number of $\ell$ 
with $n_\ell= n$. In particular, $N_n>0$ only if 
\[
  \min \{n_1,\cdots, n_p\} \leq n\leq \max \{n_1,\cdots, n_p\},
\]
the set $ \{(n,1),\cdots, (n,N_n)\}$ 
is empty if there are no $\ell$ with $n_\ell=n$, 
and 
\[
\sum_{n=1}^{d+1}\, nN_n = d+1.
\]
The coordinates of the product space 
can be written explicitly as follows;
\begin{align*}
  & E\{n_1, \cdots, n_p\}\\
  & \begin{aligned}
      = \Biggl\{\Bigl(
     &
      \bigl\{ \left(\zeta_{1,1}, \alpha_{(1,1),1}\right), \cdots, 
      \left(\zeta_{1,N_1}, \alpha_{(1,N_1),1}\right) \bigr\}, 
       \  \cdots\cdots   \\
     & 
       \bigl\{ 
       \left(\zeta_{d+1,1}, \alpha_{(d+1,1),1},\cdots, 
       \alpha_{(d+1,1),d+1}\right), \cdots, \\
     &  \quad
       \left(\zeta_{d+1,N_{d+1}}, \alpha_{(d+1,N_{d+1}),1},\cdots, 
       \alpha_{(d+1,N_{d+1}),d+1} \right)
       \bigr\} \Bigr)\in \prod_{n=1}^{d+1} C_{N_n}(\mathbb{C}^{n+1}) \\
     &  
       \Biggm| \ 
       \sum_{k=1}^{d+1}\biggl(\sum_{j=1}^{N_k}\, \alpha_{(k,j),1}\biggr)= 1,
       \quad 
       \alpha_{(k,\ast),k}\not=0 \quad (k=1, \cdots, p)\Biggr\},
   \end{aligned}
\end{align*}
where all $\zeta$s are mutually disjoint as before.

Now the map $\Pi$ is factored through by the canonical  
finite-sheeted holomorphic covering projection $\sigma$ 
of $\tilde{E}\{n_1, \cdots, n_p\}$ to 
$E\{n_1, \cdots, n_p\}$ and the natural holomorphic bijetion 
$\iota$ of  
$E\{n_1, \cdots, n_p\}$ to $C\{n_1, \cdots, n_p\}$:
\[
\Pi = \iota \circ \sigma.
\]
In particular, $\iota$ induces the desired 
complex manifold structure on 
$C\{n_1, \cdots, n_p\}$.
\qed

\begin{remark}
On the non-overlap locus $C\{1, \cdots, 1\}= C_d-C_d'$, 
$\{\alpha_{k,1}\}_{k=1}^{d+1}$ in the system of decomposition parameters
are nothing but the indices at the fixed points 
$\{\zeta_k\}_{k=1}^{d+1}$, which 
implies the assertion of Problem 12-d in \cite{M}. 
\end{remark}

\begin{corollary}
If the location and the overlap type of fixed points and the 
indices at them are fixed, then
the resulting subset of 
$ C\{n_1, \cdots, n_p\}$ has a natural complex manifold structure 
of dimension $d+1-p$.
\end{corollary}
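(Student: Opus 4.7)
\medskip
\noindent\textbf{Proof plan.}
My plan is to read off the assertion directly from Theorem 2 and the explicit form of the decomposition parameters. I would begin by observing that, in the coordinates on $\tilde E\{n_1,\dots,n_p\}$, the locations of the fixed points are by definition $\zeta_1,\dots,\zeta_p$, while the index at $\zeta_k$ is exactly $\alpha_{k,1}$. Indeed, since $R(z)=z-\hat P(z)/Q(z)$, one has $1/(z-R(z))=Q(z)/\hat P(z)$, and the index at $\zeta_k$, defined as the residue of $1/(z-R(z))$ there, is precisely the coefficient $\alpha_{k,1}$ of $1/(z-\zeta_k)$ in the partial fraction decomposition of $Q/\hat P$. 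This extends the remark preceding the corollary, which treated only the simple case, to every overlap type.

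Next I would describe the subset of $\tilde E\{n_1,\dots,n_p\}$ corresponding to fixing locations and indices: it is cut out by setting $\zeta_k=c_k$ and $\alpha_{k,1}=\iota_k$ for prescribed constants, where the $c_k$ are mutually distinct and the $\iota_k$ satisfy $\sum_k\iota_k=1$. The surviving coordinates are
\[
  \{\alpha_{k,\ell}\mid k=1,\dots,p,\ \ell=2,\dots,n_k\},
\]
totalling $\sum_{k=1}^p(n_k-1)=(d+1)-p$ complex numbers, subject only to the open conditions $\alpha_{k,n_k}\neq 0$. Hence this slice is an open subset of $\bC^{d+1-p}$ and so carries a canonical complex manifold structure of the asserted dimension.

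Finally I would transport this structure to $C\{n_1,\dots,n_p\}$ through the finite-sheeted holomorphic covering $\Pi=\iota\circ\sigma$ of Theorem 2. Because both $\{\zeta_k\}$ and $\{\alpha_{k,1}\}$ are M\"obius-invariant data of $R$ and hence descend to well-defined functions on $C\{n_1,\dots,n_p\}$, the slice in $\tilde E\{n_1,\dots,n_p\}$ projects exactly onto the target subset; any residual labelling ambiguity coming from permutations of blocks with a common multiplicity is only a finite group action and does not affect dimension. The main obstacle I anticipate is the very first step---verifying the residue identification of $\alpha_{k,1}$ with the index uniformly at non-simple fixed points, where the index computation is slightly more delicate than in the simple case. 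Once that is settled, the rest of the argument is just a dimension count inside the partial fraction decomposition.
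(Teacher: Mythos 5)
Your proposal is correct and follows essentially the same route as the paper, which simply invokes the covering structure of the preceding theorem and notes that $\dim_{\bC} C\{n_1,\cdots,n_p\}=d+p$, so that fixing the $p$ locations $\zeta_k$ and the indices $\alpha_{k,1}$ (only $p-1$ of which are independent, by $\sum_k\alpha_{k,1}=1$) leaves the $d+1-p$ free coordinates $\alpha_{k,\ell}$, $\ell\geq 2$. Your explicit identification of the index at a possibly multiple fixed point with the residue coefficient $\alpha_{k,1}$ is a worthwhile detail that the paper states only for the non-overlap locus and otherwise leaves implicit.
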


\proof
By Theorem 3, we need only to note that 
\[
  \dim_{\bC}\, C\{n_1, \cdots, n_p\} = d+p.
\]
\qed

\medskip
This corollary gives the affirmative answer to a 
conjecture of Milnor stated in Remark below 
 Problem 12-d \cite[p.152]{M}.
 
\section{The proof of Theorem 1 for the case that $d=3$}

Even in the case that $d=3$, the arguments of the proof of 
Theorem 1 can be applied,
but we can not exclude the case that $R(z)$ has $4$ simple fixed 
points $0, w_1,w_2,w_3$ such that $1/w_1,1/w_2,1/w_3$ 
lie on the same circle. So, we will treat this case 
by direct calculation using a symbolic and algebraic computation 
system (cf.  \cite{cox-ideal}, \cite{cox-using}).

For this purpose, let
$0, w_1,w_2,w_3 $ be the set of simple fixed points of
a given $R(z)$ of degree $3$ (having simple fixed points only).
We may assume that 
the denominator of which has the form
$z^3+b_2z^2+b_1z+b_0$ with $ b_0\neq 0 $ as before.
Let 
\[
  T(z)=\frac{z}{pz+q}\quad (q\neq 0),
\] 
and take the conjugate of $ R(z) $ by $ T(z) $. Then 
the coefficients  $1, b_1$, and $b_0$ in the denominator 
$z^3+b_2z^2+b_1z+b_0$ change to
\[
\begin{array}{l}      
  b_3^*(p)=w_3w_2w_1p^3-((w_2+w_3)w_1+w_3w_2)p^2+(w_1+w_2+w_3)p-1,\\
      b_1^*(p,q)=(w_3w_2w_1-2b_0)q^2p-b_1q^2, \mbox{ and }\\
      b_0^*(q)=-b_0q^3.
       \end{array} 
\]
So the condition $ b_1^*(p,q)/b_0^*(q)=-1 $ implies that 
\[
   q=\frac{(w_3w_2w_1-2b_0)p-b_1}{b_0} 
\]
and the condition $ b_0^*(p,q)/b_3^*(p)=1 $ is the equation
\begin{multline}\label{eq:A}
  (-w_1^3w_2^3w_3^3+6b_0w_1^2w_2^2w_3^2-13b_0^2w_1w_2w_3+8b_0^3)p^3\\ 
   +((3w_1^2w_2^2w_3^2-12b_0w_1w_2w_3+12b_0^2)b_1 
    +(b_0^2w_2+b_0^2w_1)w_3+b_0^2w_1w_2)p^2 \\ 
  +((-3w_1w_2w_3+6b_0)b_1^2-b_0^2w_3-b_0^2w_2-b_0^2w_1)p+b_1^3+b_0^2=0,
\end{multline}
which we write as $ A_3 p^3+A_2 p^2+A_1 p+A_0=0 $, 
where $ A_k$ are functions of $ w_1,w_2,w_3,b_0,b_1$.

Here, we consider the equations
\[
  A_3=A_2=A_1=0.
\]
By computing the Gr\"obner basis of lexicographic order
$ b_1>b_0>w_1>w_2>w_3 $,
we obtain the conditions
\begin{gather*}
   w_3=0,\ w_2=0, \ w_1=0 \\
   \mbox{ or } \quad
    W=(w_2^2-w_1w_2+w_1^2)w_3^2+(-w_1w_2^2-w_1^2w_2)w_3+w_1^2w_2^2=0.  
\end{gather*}
in $ \mathbb{C}[w_1,w_2,w_3] $.
The conditions $ w_k=0 $ ($k=1,2,3$) contradict
the assumption that $R(z)$ has $4$ simple fixed points.
Also we recall that the case that $W=0$ is one excluded 
in the proof of Theorem 1, 
and  actually the condition $W=0$ implies that 
$1/w_1,1/w_2$, and $1/w_3$ form a regular triangle in $\bC$. 
(If $d\geq 4$, we can assume that 
there are neither circles nor lines in $\bC-\{0\}$ 
which contain all non-zero fixed points.)

As before, we consider the conjugate of $R(z)$ by
the translation $L_k(z)= z+ w_k$ for every $k$.
Here we need to consider the case of
$ L_1(z)=z+w_1 $ only, for the other cases are similar. 
Firstly, take the conjugate of $R(z)$ by 
$ L_1(z) $, and secondly
take the conjugate by $ T(z) $, and we see that $R(z)$ changes to
\[
  R^{\#}(z)=\frac{P^{\#}(z)}{Q^{\#}(z)} 
\]
with
\[
  Q^{\#}(z)=b_3^\#z^3 + b_2^\# z^2 + b_1^\# z + b_0^\#,
\]
where 
\[
  \begin{array}{l}
    b^{\#}_3=(w_1^3+(-w_2-w_3)w_1^2+w_3w_2w_1)p^3+(3w_1^2+(-2w_2-2w_3)w_1\\
           \qquad
              +w_3w_2)p^2+(3w_1-w_2-w_3)p+1,\\
    b^{\#}_1=(2w_1b_1+2w_1^2b_2+3w_1^3+(-w_2-w_3)w_1^2+w_3w_2w_1+2b_0)q^2p\\
           \qquad
               +(b_1+2w_1b_2+3w_1^2)q^2, \mbox{ and } \\
    b^{\#}_0=(w_1b_1+w_1^2b_2+w_1^3+b_0)q^3. \\
   \end{array} 
\]
Hence the condition $ b_1^{\#}/b_0^{\#}=-1  $
implies that
\begin{multline*}
   q = \frac{-1}{w_1b_1+w_1^2b_2+w_1^3+b_0}\\
       \times
        \bigl\{\bigl(2w_1b_1+2w_1^2b_2+3w_1^3-(w_2+w_3)w_1^2+w_3w_2w_1
         +2b_0\bigr)p \\
        +b_1+2w_1b_2+3w_1^2\bigr\},
\end{multline*}
and the condition $ b_0^{\#}/b_3^{\#}=1  $
is the equation  
\begin{equation}\label{eq:B}
  B_3 p^3+B_2 p^2+B_1 p+B_0=0
\end{equation}
with
\begin{align*}
  B_3=\,
          & -\biggl\{w_1b_1+w_1^2b_2+2w_1^3
          +(-w_2-w_3)w_1^2+w_3w_2w_1+b_0\biggr\} \\
          & \times
            \biggl\{8w_1^2b_1^2+(16w_1^3b_2+21w_1^4
            +(-5w_2-5w_3)w_1^3+5w_3w_2w_1^2
            +16b_0w_1)b_1\\
          & +8w_1^4b_2^2+(21w_1^5+(-5w_2-5w_3)w_1^4+5w_3w_2w_1^3
            +16b_0w_1^2)b_2\\
          & +14w_1^6+(-7w_2-7w_3)w_1^5+(w_2^2+9w_3w_2+w_3^2)w_1^4\\
          & +(-2w_3w_2^2-2w_3^2w_2+21b_0)w_1^3
            +(w_3^2w_2^2-5b_0w_2-5b_0w_3)w_1^2\\
          & +5b_0w_3w_2w_1+8b_0^2\biggr\},
\end{align*}
\begin{align*}
  B_2=\,
      &(-12w_1^2b_1^3-(48w_1^3b_2+75w_1^4-(14w_2+14w_3)w_1^3
       +13w_3w_2w_1^2+24b_0w_1)b_1^2\\
      & +(-60w_1^4b_2^2+(-186w_1^5 
       +(40w_2+40w_3)w_1^4-38w_3w_2w_1^3-72b_0w_1^2)b_2\\
      & -141w_1^6+(58w_2+58w_3)w_1^5+(-3w_2^2-62w_3w_2-3w_3^2)w_1^4 \\
      & +(6w_3w_2^2+6w_3^2w_2-114b_0)w_1^3
        +(-3w_3^2w_2^2+16b_0w_2+16b_0w_3)w_1^2 \\
      & -14b_0w_3w_2w_1-12b_0^2)b_1-24w_1^5b_2^3
        +(-111w_1^6+(26w_2+26w_3)w_1^5 \\
      & -25w_3w_2w_1^4-48b_0w_1^3)b_2^2+(-168w_1^7+(76w_2+76w_3)w_1^6 \\
      & +(-6w_2^2-86w_3w_2-6w_3^2)w_1^5
        +(12w_3w_2^2+12w_3^2w_2-150b_0)w_1^4 \\
      & +(-6w_3^2w_2^2+28b_0w_2+28b_0w_3)w_1^3
         -26b_0w_3w_2w_1^2-24b_0^2w_1)b_2  \\
      & -84w_1^8+(56w_2+56w_3)w_1^7+(-9w_2^2-73w_3w_2-9w_3^2)w_1^6 \\
      & +(18w_3w_2^2+18w_3^2w_2-114b_0)w_1^5+(-9w_3^2w_2^2 
           +40b_0w_2+40b_0w_3)w_1^4\\
      & -38b_0w_3w_2w_1^3-39b_0^2w_1^2+(2b_0^2w_2+2b_0^2w_3)w_1-b_0^2w_3w_2),
\end{align*}
\begin{align*}
  B_1=\,
      & (-6w_1b_1^3+(-30w_1^2b_2-48w_1^3+(4w_2+4w_3)w_1^2 
        -3w_3w_2w_1-6b_0)b_1^2\\
      & +(-48w_1^3b_2^2+(-150w_1^4+(14w_2+14w_3)w_1^3 
        -12w_3w_2w_1^2-24b_0w_1)b_2 \\
      & -114w_1^5+(20w_2+20w_3)w_1^4-18w_3w_2w_1^3-42b_0w_1^2+(2b_0w_2
              +2b_0w_3)w_1)b_1\\
      & -24w_1^4b_2^3+(-111w_1^5+(13w_2+13w_3)w_1^4-12w_3w_2w_1^3
        -24b_0w_1^2)b_2^2\\
      & +(-168w_1^6+(38w_2+38w_3)w_1^5-36w_3w_2w_1^4-78b_0w_1^3
          +(2b_0w_2+2b_0w_3)w_1^2)b_2\\
      & -84w_1^7+(28w_2+28w_3)w_1^6-27w_3w_2w_1^5
            -60b_0w_1^4+(2b_0w_2+2b_0w_3)w_1^3\\
      & -3b_0^2w_1+b_0^2w_2+b_0^2w_3),  
\end{align*}
and
\begin{align*}
  B_0=\,
      & -b_1^3+(-6w_1b_2-10w_1^2)b_1^2+(-12w_1^2b_2^2-38w_1^3b_2-29w_1^4 
         -2b_0w_1)b_1\\
      & -8w_1^3b_2^3-37w_1^4b_2^2 
         +(-56w_1^5-2b_0w_1^2)b_2-28w_1^6-2b_0w_1^3-b_0^2.
\end{align*}

Now, we consider the equations
\[
    A_3=A_2=A_1=0 \ \mbox{ and } \  B_3=B_2=B_1=0.
\]
By computing the Gr\"obner basis as before,
we obtain the conditions 
\[
   w_3=0,\ w_2=0,\ \mbox{ or } \ w_2-w_3=0,
\]
in $ \mathbb{C}[w_2,w_3] $,
which again gives a contradiction to the assumption.
Therefore, the equation either \eqref{eq:A} or \eqref{eq:B} 
has a solution $ p $.

Thus we have shown the assertion of Theorem 1 for the case that $d=3$.

\

\

\noindent
Masayo Fujimura,
Department of Mathematics, National Defense Academy,
Yokosuka 239-8686, JAPAN (masayo@nda.ac.jp)

\

\noindent
Masahiko Taniguchi,
Department of Mathematics, Nara Women's University,
Nara 630-8506, JAPAN (tanig@cc.nara-wu.ac.jp)

\end{document}